\newtheorem{thm}{Theorem}
\newtheorem{lem}[thm]{Lemma}
\newtheorem{cor}[thm]{Corollary}
\newtheorem{prob}{Problem}
\title[Subextensions for co-induced modules]{Subextensions for co-induced modules}
\author{{Andrei V. Zavarnitsine}}%
\address{\textup{\scriptsize
Andrei V. Zavarnitsine\\
Sobolev Institute of Mathematics\\
4, Koptyug av.\\
630090, Novosibirsk, Russia\\
}}
\email{zav@math.nsc.ru}
\date{}
\begin{document}
\begin{abstract}
Using cohomological methods, we prove a criterion for the embedding of a group extension with abelian kernel
into the split extension of a co-induced module. This generalises some earlier similar results.
We also prove an assertion about the conjugacy of complements in split extensions of co-induced modules.
Both results follow from a relation between homomorphisms of certain cohomology groups.
\medskip

\noindent{\sc Keywords:} subextension, co-induced module, group cohomology.

\noindent{\sc MSC2010:} 20D99

\end{abstract}

\maketitle

\section{Introduction}

The natural action of $G=\operatorname{PSL}_n(q)$ on
the projective space $\mathbb{P}^{n-1}$ gives rise to the permutation wreath product of
$L=\mathbb{Z}/r\mathbb{Z}$ and $G$, where $r$ is a prime divisor of $(n,q-1)$. The criterion of when this product contains a subgroup isomorphic
to the nonsplit central extension of $L$ by $G$ was obtained in \cite{16Zav}. Namely, it was proved that the
containments holds iff $r$ does not divide $(q-1)/(n,q-1)$. In the present paper, using some cohomology theory, we generalise this fact by
finding a criterion for embedding extensions with an abelian kernel into a split extension.
To state the results more precisely, we introduce some terminology. In what follows, we use right modules and right composition of maps.

Let $R$ be a commutative ring, $G$ a group (possibly infinite), and let $L$ and $M$ be $RG$-modules. Assume that

\begin{gather}
0\longrightarrow L \stackrel{\varepsilon}{\longrightarrow} M \label{emblm}\\[5pt]
0\longrightarrow L \stackrel{\iota}{\longrightarrow} S\stackrel{\pi}{\longrightarrow} G\longrightarrow 1, \label{exlg} \\[5pt]
0\longrightarrow M \stackrel{\lambda}{\longrightarrow} E \stackrel{\rho}{\longrightarrow} G \longrightarrow 1  \label{exmg}
\end{gather}

\medskip
\noindent are exact sequences of modules and groups, where the conjugation action of $S$ on $L\iota$ agrees with the module
structure of $L$, i.\,e. $(l\iota)^s=l(s\pi)\iota $ for all $l\in L$, $s\in S$, and similarly for $M$ and $E$.
We say that $S$ is a {\em subextension} of $E$ with respect to the embedding $\varepsilon$ if there exists a group homomorphism $\beta$
that makes the following diagram commutative:
\begin{equation}\label{sube}
\begin{array}{c}
\xymatrix@=4.4mm@M=1.5mm{
       &0\ar[d]                  &1\ar@{.>}[d]&&\\
0\ar[r]&L\ar[r]\ar[d]^{\,\varepsilon}&S\ar[r]\ar@{.>}[d]^{\,\beta}&G\ar[r]\ar@{=}[d]&1\\
0\ar[r]&M\ar[r]                  &E\ar[r]                  &G\ar[r]          &1
}\end{array}
\end{equation}

\medskip
\noindent Should $\beta$ exist, it must be a monomorphism, which follows from diagram chase.

The map $\varepsilon$ induces a homomorphism of the second cohomology groups
\begin{equation}\label{h2h}
\varepsilon^{(2)}: H^2(G,L) \longrightarrow H^2(G,M).
\end{equation}
Let $\overline{\delta}\in H^2(G,L)$ and $\overline{\gamma}\in H^2(G,M)$ be the elements that define, respectively, the extensions $S$  and $E$
up to equivalence. The following fact holds.
\begin{lem}\cite[Lemma 2]{13Zav}\label{subc}
In the above notation, $S$ is a subextension of $E$ with respect to $\varepsilon$ if and only if
$\overline{\delta}\varepsilon^{(2)}=\overline{\gamma}$.
\end{lem}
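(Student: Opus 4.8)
The plan is to reduce everything to $2$-cocycles and to the description of $\varepsilon^{(2)}$ on cochains. First I would fix a set-theoretic section $t\colon G\to S$ of $\pi$, normalised by $t(1)=1$, and record the $2$-cocycle $\delta\colon G\times G\to L$ defined by $t(g)t(h)=(\delta(g,h)\iota)\,t(gh)$, whose class is $\overline{\delta}$. Since $\varepsilon^{(2)}$ is induced on cochains by post-composition with $\varepsilon$, the class $\overline{\delta}\varepsilon^{(2)}$ is represented by $(g,h)\mapsto\delta(g,h)\varepsilon$, while $\overline{\gamma}$ is represented by the cocycle of $E$ read off from any section of $\rho$. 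The whole statement then becomes a comparison of these two cocycles in $H^2(G,M)$.

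For the direction ``subextension $\Rightarrow$ equality'' I would assume $\beta$ given. Commutativity of the right-hand square of \eqref{sube} yields $\beta\rho=\pi$, so $u:=t\beta$ is a section of $\rho$. Applying $\beta$ to the relation defining $\delta$ and using $\iota\beta=\varepsilon\lambda$ (the left-hand square) gives $u(g)u(h)=(\delta(g,h)\varepsilon\lambda)\,u(gh)$; hence the cocycle of $E$ relative to $u$ is exactly $\delta\varepsilon$. As this cocycle represents both $\overline{\delta}\varepsilon^{(2)}$ and $\overline{\gamma}$, the equality $\overline{\delta}\varepsilon^{(2)}=\overline{\gamma}$ follows.

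For the converse I would start from $\overline{\delta}\varepsilon^{(2)}=\overline{\gamma}$ and first replace a section $u$ of $\rho$ by its correction through a suitable $1$-cochain $G\to M$, so that the cocycle of $E$ relative to $u$ equals $\delta\varepsilon$ exactly and $u(1)=1$. Writing each $s\in S$ in its unique normal form $s=(l\iota)\,t(g)$ with $g=s\pi$, I would then define $\beta\colon S\to E$ by $\bigl((l\iota)t(g)\bigr)\beta=(l\varepsilon\lambda)\,u(g)$. With this definition the diagram \eqref{sube} commutes on the nose: $\iota\beta=\varepsilon\lambda$ because $t(1)=u(1)=1$, and $\beta\rho=\pi$ because $M\lambda=\ker\rho$. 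Injectivity of $\beta$ is then automatic by the diagram chase noted after \eqref{sube}, so $S$ is a subextension of $E$ with respect to $\varepsilon$.

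The hard part will be checking that this $\beta$ is a homomorphism. Expanding a product $(l_1\iota)t(g_1)\cdot(l_2\iota)t(g_2)$ in $S$ forces one to move $(l_2\iota)$ past $t(g_1)$; by the hypothesis $(l\iota)^s=l(s\pi)\iota$ this turns it into $(l_2 g_1^{-1})\iota$, and collecting terms puts the product in normal form with $L$-component $l_1+l_2 g_1^{-1}+\delta(g_1,g_2)$. Carrying out the mirror computation on the images in $E$, now governed by the analogous rule $(m\lambda)^e=m(e\rho)\lambda$ and by the cocycle $\delta\varepsilon$, produces the $M$-component $(l_1+l_2 g_1^{-1}+\delta(g_1,g_2))\varepsilon$. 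The two match precisely because $\varepsilon$ is a $G$-module map, so that $(l_2 g_1^{-1})\varepsilon=(l_2\varepsilon)g_1^{-1}$ and applying $\varepsilon$ commutes with the action. This is exactly where the assumed agreement of the conjugation actions with the module structures in \eqref{emblm}--\eqref{exmg} is indispensable, and I expect it to be the only genuinely delicate point; once multiplicativity is verified, both implications are established.
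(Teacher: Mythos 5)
Your proposal is correct. Note that the paper does not actually prove this lemma: it is quoted from \cite[Lemma 2]{13Zav}, so there is no internal proof to compare against; your argument is the standard factor-set proof that such a statement admits, and it is complete in both directions --- the forward direction by transporting the factor set of $S$ along $\beta$ via the two commuting squares of \eqref{sube}, and the converse by correcting a section of $\rho$ so that its factor set is exactly $\delta\varepsilon$ and then defining $\beta$ on normal forms, with multiplicativity coming down to $G$-equivariance of $\varepsilon$, exactly as you say. One bookkeeping caveat: you place the factor set on the left of the transversal, $t(g)t(h)=(\delta(g,h)\iota)\,t(gh)$, which for right modules yields the cocycle identity $\delta(g,h)+\delta(gh,k)=\delta(h,k)g^{-1}+\delta(g,hk)$, i.e.\ a cocycle for the associated left action $g\cdot l=lg^{-1}$ rather than for the standard right-module bar resolution; the usual right-module convention is $t(g)t(h)=t(gh)(f(g,h)\iota)$, related to yours by $\delta(g,h)=f(g,h)(gh)^{-1}$. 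Since this twist commutes with post-composition by the $RG$-map $\varepsilon$, the equality $\overline{\delta}\varepsilon^{(2)}=\overline{\gamma}$ means the same thing in either convention, and your computations are internally consistent throughout, so this is a normalization issue, not a gap.
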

This general criterion sometimes can be made more explicit. For example,
in the situation where $G=\operatorname{PSL}_n(q)$ described earlier, we clearly have a
central extension of $R=\mathbb{Z}/r\mathbb{Z}$
by $G$ as a subextension of the wreath product with respect to the diagonal embedding of the principal $RG$-module into
the permutation module, and the above criterion for the existence of this subextension is purely number-theoretic. Since permutation
modules are co-induced, we can generalise this as follows.

We say that a subgroup $H\leqslant G$ is {\em liftable} to $S$, where $S$ is as in (\ref{exlg}), if $H\pi^{-1}$ splits over $L\iota$.
Given an $RH$-module $N$, we recall that
$$\operatorname{Coind}_H^G(N)=\operatorname{Hom}_{RH}(RG,N)$$
is an $RG$-module with the action of $g\in G$ on $\mu\in \operatorname{Coind}_H^G(N)$ given by
$$(\mu g)(x)=\mu(gx)$$
for all $x\in G$.

Our main result is as follows.

\begin{thm}\label{main} Let $G$ be a group, $H\leqslant G$, and let $L$ be an $RG$-module. Denote $M= \operatorname{Coind}_H^G(L_{H})$ and
let $\varepsilon$ be the canonical embedding
\begin{equation}\label{embe}
0\longrightarrow L \stackrel{\varepsilon}{\longrightarrow} M.
\end{equation}
Then an extension
\begin{equation}\label{exe}
0\longrightarrow L \longrightarrow S \longrightarrow G\longrightarrow 1
\end{equation}
is a subextension of the natural semidirect product
$$
0\longrightarrow M \longrightarrow M\leftthreetimes G \longrightarrow G \longrightarrow 1
$$
with respect to $\varepsilon$ if and only if $H$ is liftable to $S$.
\end{thm}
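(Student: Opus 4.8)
The plan is to reduce the geometric subextension condition to a cohomological one by Lemma~\ref{subc}, and then to evaluate that condition with Shapiro's lemma.

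First, I would apply Lemma~\ref{subc} to $\varepsilon$: the extension~(\ref{exe}) is a subextension of $M\leftthreetimes G$ with respect to $\varepsilon$ if and only if $\overline{\delta}\,\varepsilon^{(2)}=\overline{\gamma}$, where $\overline{\delta}\in H^2(G,L)$ and $\overline{\gamma}\in H^2(G,M)$ are the classes defining $S$ and $M\leftthreetimes G$ respectively. Since $M\leftthreetimes G$ is split, $\overline{\gamma}=0$, so the condition becomes $\overline{\delta}\,\varepsilon^{(2)}=0$ in $H^2(G,M)$.

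The crux is to understand $\varepsilon^{(2)}$. As $M=\operatorname{Coind}_H^G(L_H)$, the Eckmann--Shapiro lemma furnishes a natural isomorphism $\sigma\colon H^2(G,M)\to H^2(H,L_H)$. I claim that the composite $\varepsilon^{(2)}\sigma\colon H^2(G,L)\to H^2(H,L_H)$ is exactly the restriction map $\operatorname{res}^G_H$. Abstractly this holds because $\varepsilon$ is the unit of the (restriction, coinduction) adjunction and $\sigma$ is realised by restricting to $H$ followed by evaluation at $1\in G$, so the triangle identity for unit and counit collapses the composite to $\operatorname{res}^G_H$; concretely I would verify it on $2$-cocycles, using $(l\varepsilon)(x)=lx$ and the explicit Shapiro isomorphism, keeping careful track of the right-module and right-composition conventions. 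This commuting triangle is the ``relation between homomorphisms of certain cohomology groups'' advertised in the abstract, and I expect it to absorb essentially all of the work.

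Granting the triangle, the proof closes quickly. As $\sigma$ is an isomorphism, $\overline{\delta}\,\varepsilon^{(2)}=0$ if and only if $\overline{\delta}\,\varepsilon^{(2)}\sigma=0$, that is, if and only if $\overline{\delta}\operatorname{res}^G_H=0$ in $H^2(H,L_H)$. It remains to recall the standard interpretation of restriction on extension classes: $\overline{\delta}\operatorname{res}^G_H$ is the class of the extension $0\to L\to H\pi^{-1}\to H\to 1$ obtained by pulling $S$ back along $H\hookrightarrow G$. This class is trivial precisely when that extension splits over $L\iota$, which is the definition of $H$ being liftable to $S$. Chaining the equivalences gives the theorem.
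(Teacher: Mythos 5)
Your proposal is correct and follows essentially the same route as the paper: reduce via Lemma~\ref{subc} to the condition $\overline{\delta}\varepsilon^{(2)}=0$, identify $\varepsilon^{(2)}$ composed with the Shapiro isomorphism as the restriction map $\alpha^{(2)}$ (this commuting triangle is precisely the paper's Lemma~\ref{comd} and Corollary~\ref{ker}, which the paper proves by the same cochain computation you sketch, resting on $\varepsilon\varphi=\operatorname{id}_L$), and then read off triviality of the restricted class as liftability of $H$.
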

We recall that the embedding $\varepsilon$ in (\ref{embe}) is the image of the identity map of $L_{H}$ under the
natural isomorphism
$$
\operatorname{Hom}_{RH}(L_{H},L_{H})\cong \operatorname{Hom}_{RG}(L,\operatorname{Coind}_H^G(L_{H})).
$$
Explicitly, we have
\begin{equation}\label{le1}
  (l\varepsilon)(g)=lg
\end{equation}
for all $l\in L$, $g\in G$, see \cite[Corollary 2.8.3(ii)]{98Ben}.

A few remarks are due about Theorem \ref{main}.
Suppose a group $S$ has an abelian normal subgroup $L$ and quotient $G=S/L$. Then conjugation defines on $L$ the structure of a $\mathbb{Z}G$-module.
If we take $H$ to be
the trivial subgroup of $G$ then Theorem \ref{main} ensures existence of the embedding $S\to M\leftthreetimes G$, where $M=\operatorname{Coind}_H^G(L_{H})$.
It is readily seen that in this case $M\leftthreetimes G$ is isomorphic to the
unrestricted regular wreath product $L\operatorname{wr} G$ and hence the embedding $S\to M\leftthreetimes G$ also follows from
\begin{thm}[Kaloujnine--Krasner, \cite{48KalKra}]\label{KK} Every group $S$ with a normal subgroup $L$ can be embedded into the unrestricted regular
wreath product $L\operatorname{wr} S/L$.
\end{thm}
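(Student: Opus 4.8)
The plan is to give the classical Kaloujnine--Krasner embedding directly, since Theorem~\ref{KK} is valid for an \emph{arbitrary} (possibly nonabelian) normal subgroup $L$ and therefore cannot be deduced from Theorem~\ref{main} or Lemma~\ref{subc}, which presuppose that the kernel is an $RG$-module; the point of the present excerpt is only that the two constructions agree in the abelian case. Writing $G=S/L$ and letting $\pi\colon S\to G$ be the quotient map, I would first record that for the trivial subgroup $M=\operatorname{Coind}_1^G(L_1)$ is, as a set, the full function group $\operatorname{Fun}(G,L)$, and that $M\leftthreetimes G$ is the unrestricted regular wreath product $L\operatorname{wr}G$, with the co-induced shift action $(\mu g)(x)=\mu(gx)$ from the excerpt. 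This action pins down the multiplication in the semidirect product: tracing the requirement that conjugation on the base agree with the module action forces $(\mu,g)(\nu,h)=\bigl(\mu\cdot(\nu g^{-1}),\,gh\bigr)$, where $(\nu g^{-1})(x)=\nu(g^{-1}x)$ and the dot is pointwise multiplication in $\operatorname{Fun}(G,L)$. The whole theorem then reduces to exhibiting one explicit injective homomorphism $\Phi\colon S\to L\operatorname{wr}G$.

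The construction is the standard one via a transversal. I would fix a set-theoretic section $t\colon G\to S$ of $\pi$ with $t(1)=1$, abbreviate $\bar s=s\pi$, and for each $s\in S$ define a coordinate function $f_s\colon G\to L$ by
\[
f_s(x)=t(x^{-1})\,s\,t\bigl(x^{-1}\bar s\bigr)^{-1},
\]
which indeed lands in $L=\ker\pi$ because applying $\pi$ gives $x^{-1}\bar s(x^{-1}\bar s)^{-1}=1$. The proposed embedding is $\Phi(s)=(f_s,\bar s)$. The reason for evaluating the transversal at $x^{-1}$ rather than at $x$ is exactly to accommodate the paper's left-shift convention $(\mu g)(x)=\mu(gx)$; with the opposite convention one would drop the inversion.

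The heart of the verification is the homomorphism property, and the main obstacle I expect is purely bookkeeping: making the cocycle-type identity for $f_s$ line up with the product rule above. Concretely I would check that
\[
f_{ss'}(x)=f_s(x)\,f_{s'}\bigl(\bar s^{-1}x\bigr),
\]
which is precisely the base-coordinate of $(f_s,\bar s)(f_{s'},\bar{s'})$. This falls out by telescoping: substituting the definition and using $(\bar s^{-1}x)^{-1}=x^{-1}\bar s$, the inner factor $t(x^{-1}\bar s)^{-1}t(x^{-1}\bar s)$ cancels and the outer transversal argument collapses to $x^{-1}\overline{ss'}$, leaving $t(x^{-1})\,ss'\,t(x^{-1}\overline{ss'})^{-1}=f_{ss'}(x)$. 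Once this identity is in hand the map $\Phi$ is a homomorphism, and injectivity is immediate: evaluating at $x=1$ gives $f_s(1)=s\,t(\bar s)^{-1}$, so $\Phi(s)=\Phi(s')$ forces $\bar s=\bar{s'}$ and then $s=s'$. This completes the embedding $S\hookrightarrow L\operatorname{wr}S/L$, with no abelianness or cohomological input required.
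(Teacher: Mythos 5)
Your construction is correct, and it takes a genuinely different route from the paper — indeed, the paper never proves Theorem~\ref{KK} itself: it cites Kaloujnine--Krasner as a classical result and instead observes that, when $L$ is abelian, the embedding is recovered from Theorem~\ref{main} by taking $H=1$, since the trivial subgroup is always liftable and $\operatorname{Coind}_1^G(L)\leftthreetimes G$ is the unrestricted regular wreath product $L\operatorname{wr}G$; behind that derivation sit Lemma~\ref{subc}, Shapiro's lemma and Corollary~\ref{ker}. You rightly point out that this cohomological route presupposes that $L$ is an $RG$-module, so the full statement, which allows nonabelian $L$, requires the classical transversal argument you give. Your verification is sound: the coordinate functions $f_s(x)=t(x^{-1})\,s\,t(x^{-1}\bar s)^{-1}$ land in $\ker\pi=L$; the multiplication rule $(\mu,g)(\nu,h)=\bigl(\mu\cdot(\nu g^{-1}),\,gh\bigr)$ is the one compatible with the paper's shift convention $(\mu g)(x)=\mu(gx)$ (conjugating the base by $(1,g)$ does give $\nu\mapsto\nu g$); the telescoping identity $f_{ss'}(x)=f_s(x)\,f_{s'}(\bar s^{-1}x)$ is exactly the base coordinate of the product; and evaluation at $x=1$ gives injectivity. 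As for what each approach buys: yours is elementary, self-contained, and covers arbitrary $L$, which the paper's machinery cannot; the paper's approach, restricted to abelian $L$, yields strictly more than the bare embedding — Theorem~\ref{main} upgrades it to a necessary and sufficient liftability criterion for an arbitrary subgroup $H$ (the Kaloujnine--Krasner embedding being the degenerate case $H=1$), and the same kernel identity in dimension $1$ gives the conjugacy statement of Theorem~\ref{aux}, neither of which is visible from the transversal construction.
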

\noindent
Therefore, we give and alternative cohomological proof of this result in the case of abelian $L$ and specify a necessary and sufficient condition
for the embedding.

Now, let $L$ be the principal $RG$-module and suppose that the index $|G:H|$ is finite. Then $M$ is just the transitive permutation
module corresponding to the action of $G$ on the cosets of $H$ and $L\varepsilon$ is its diagonal submodule. In \cite{17Zav}, we have considered this situation
restricted to the case where $R$ has prime characteristic but generalised to not necessarily transitive action and shown without using cohomology that
the liftability of $H$ to $S$ is necessary for the existence of the required subextension which must be a central extension in this case.
Conversely, the sufficiency of liftability in the general case can also be deduced
without applying cohomological methods using a generalisation of the Kaloujnine--Krasner theorem \cite[Theorem 2.10.9]{82Suz} which
is originally due to B.\,H.\,Neumann and is related to the so-called twisted wreath products.

As we show below, Theorem \ref{main} follows from a group-theoretic interpretation in dimension 2 of the equality of kernels
of homomorphisms between certain cohomology groups (see Corollary \ref{ker}) which holds in arbitrary dimension. Since cohomology
in dimension 1 is usually also meaningful for groups, we prove the corresponding corollary as well which is as follows.

\begin{thm}\label{aux} Let $G$ be a group, $H\leqslant G$, and let $L$ be an $RG$-module. Denote $M=\operatorname{Coind}_H^G(L_{H})$ and
let $\varepsilon$ be the canonical embedding $(\ref{embe})$.
Then a complement to $L$ in $L\leftthreetimes G$ is $M$-conjugate to $G$ if and only if its intersection with $L\leftthreetimes H$ is $L$-conjugate to $H$.
\end{thm}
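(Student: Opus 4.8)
The plan is to convert both sides of the asserted equivalence into the vanishing of first cohomology classes attached to a single derivation, and then to reduce the theorem to the dimension-one instance of the cohomological identity behind Corollary~\ref{ker}. First I would set up the standard dictionary between complements and cocycles. A complement to $L$ in $L\leftthreetimes G$ has the form $G_{d}=\{\,(ld)g:g\in G\,\}$ for a map $d\colon G\to L$, and the subgroup condition forces $d\in Z^{1}(G,L)$; two such complements $G_{d},G_{d'}$ are $L$-conjugate precisely when $d-d'$ is a principal derivation, so $G_{d}$ is $L$-conjugate to $G=G_{0}$ if and only if the class $[d]$ vanishes in $H^{1}(G,L)$. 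I would record the two elementary computations that make the translation work: since $L\leftthreetimes H=\{\,lh:l\in L,\ h\in H\,\}$, an element $(gd)g$ lies in $L\leftthreetimes H$ exactly when $g\in H$, whence $G_{d}\cap(L\leftthreetimes H)=H_{d|_{H}}$ is the complement to $L$ in $L\leftthreetimes H$ attached to the restricted cocycle $d|_{H}=d\cdot\operatorname{res}^{G}_{H}$; and, viewing $G_{d}$ inside $M\leftthreetimes G$ through $\varepsilon$, it becomes the complement $M_{d\varepsilon}$ to $M$ whose class is $[d]\varepsilon^{(1)}$.

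With this dictionary in place the two hypotheses become cohomological. The intersection $G_{d}\cap(L\leftthreetimes H)$ is $L$-conjugate to $H$ if and only if $[d|_{H}]=0$ in $H^{1}(H,L_{H})$, i.e.\ $[d]\in\ker\operatorname{res}^{G}_{H}$; and $G_{d}$ is $M$-conjugate to $G$ if and only if $[d]\varepsilon^{(1)}=0$, i.e.\ $[d]\in\ker\varepsilon^{(1)}$. Thus Theorem~\ref{aux} is equivalent to the statement that, as subgroups of $H^{1}(G,L)$,
\[
\ker\varepsilon^{(1)}=\ker\operatorname{res}^{G}_{H}.
\]

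To obtain this equality I would use Shapiro's lemma. Let $\sigma_{n}\colon H^{n}(G,M)\to H^{n}(H,L_{H})$ be the Shapiro isomorphism, which is $\operatorname{res}^{G}_{H}$ followed by the map induced by the counit $\theta\colon M\to L_{H}$, $\mu\theta=\mu(1)$. Naturality of restriction with respect to the $RG$-map $\varepsilon$ gives $\varepsilon^{(n)}\sigma_{n}=\operatorname{res}^{G}_{H}\cdot(\varepsilon\theta)_{*}$, and the defining formula $(\ref{le1})$ yields $l\varepsilon\theta=(l\varepsilon)(1)=l$, so that $\varepsilon\theta=\operatorname{id}_{L_{H}}$ as $RH$-modules. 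Hence
\[
\varepsilon^{(n)}\sigma_{n}=\operatorname{res}^{G}_{H}\colon H^{n}(G,L)\longrightarrow H^{n}(H,L_{H})
\]
in every dimension; since $\sigma_{n}$ is an isomorphism, $\ker\varepsilon^{(n)}=\ker\operatorname{res}^{G}_{H}$, and the case $n=1$ closes the argument. The one place where the specific module $M$ and map $\varepsilon$ genuinely enter is the identity $\varepsilon\theta=\operatorname{id}_{L_{H}}$, which is immediate from $(\ref{le1})$.

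I expect the main obstacle to be not the final cohomological identity, which is short once $\varepsilon\theta=\operatorname{id}$ is noted, but the careful bookkeeping of the dictionary: verifying that conjugacy by the \emph{normal} subgroups $M$ and $L$ (rather than by the full groups) corresponds exactly to cohomological triviality, and that the intersection with $L\leftthreetimes H$ realises restriction of cocycles on the nose. These are routine but convention-sensitive, and in the right-module, right-composition setting of the paper they must be checked against the explicit actions $(\mu g)(x)=\mu(gx)$ and $(l\varepsilon)(g)=lg$ so that $\theta$ is genuinely $RH$-linear and $\sigma_{n}$ is genuinely the Shapiro map.
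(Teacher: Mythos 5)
Your proposal is correct and takes essentially the same route as the paper: both proofs translate ``$M$-conjugate to $G$'' into membership in $\operatorname{Ker}\varepsilon^{(1)}$ and ``intersection $L$-conjugate to $H$'' into membership in the kernel of restriction, and then identify the two kernels via the Shapiro isomorphism together with the identity $\varepsilon\varphi=\operatorname{id}_{L}$, which is exactly the paper's Corollary~\ref{ker}. The only differences are cosmetic: you spell out the complement--cocycle dictionary explicitly rather than citing it, and you re-derive the kernel equality by naturality of restriction instead of the cochain computation of Lemma~\ref{comd}.
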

\noindent
In the statement of Theorem \ref{aux}, we assume that $L\leftthreetimes G$ is embedded in $M \leftthreetimes G$ via $(g,l)\mapsto (g,l\varepsilon)$ for $g\in G$, $l\in L$, and by $X$-conjugacy we mean the conjugacy by elements of $X$, where $X\in \{M,L\}$.

\section{$H^n$ as a functor}

We recall that $H^n$, $n\geqslant 0$, can be viewed as a functor from the category of pairs $(G,M)$, where $M$ is a $G$-module, see \cite[\S III.8]{82Bro}.
A morphism in this category is a map
$$
(\alpha,\varphi):(H,N)\to (G,M)
$$
with $\alpha:H\to G$ a group homomorphism and $\varphi:M\to N$ a homomorphism of $H$-modules, where $M$ is considered as an $H$-module via $\alpha$, i.\,e.
\begin{equation}\label{comcon}
(m(h\alpha))\varphi=(m\varphi)h
\end{equation}
for all $m\in M$, $h\in H$. It gives rise to a homomorphism
$$(\alpha,\varphi)^{(n)}:H^n(G,M)\to H^n(H,N).$$
By considering the standard (normalised) projective resolutions for $N$ and $M$,
it can be seen that $(\alpha,\varphi)^{(n)}$ is induced from the chain map $C^n(G,M)\to C^n(H,N)$ on (normalised) cochains which we also denote by
$(\alpha,\varphi)^{(n)}$ and which is given by
$$
\lambda(\alpha,\varphi)^{(n)}= (\alpha\times \ldots \times\alpha)\lambda \varphi
$$
for every $\lambda\in C^n(G,M)$. Three particular cases are of interest to us.

$(i)$ Suppose that $H=G$ and $\alpha=\operatorname{id}_H$. Then we denote $\varphi^{(n)}=(\alpha,\varphi)^{(n)}$ which is just
the standard induced homomorphism $H^n(G,\varphi)$ in this case. In particular, $\lambda \varphi^{(n)}=\lambda \varphi$ for $\lambda\in C^n(G,M)$.

$(ii)$ Suppose that $\alpha:H\hookrightarrow G$ is an embedding and $N=M_H$. If $\varphi=\operatorname{id}_{M}$ then the compatibility
condition (\ref{comcon}) holds and we denote $\alpha^{(n)}=(\alpha,\varphi)^{(n)}$. In particular, $\lambda \alpha^{(n)} = (\alpha\times\ldots \times\alpha)\lambda$
for $\lambda\in C^n(G,M)$.

$(iii)$ Suppose that $\alpha:H\hookrightarrow G$ is an embedding and $M=\operatorname{Coind}^G_H(N)$. If $\varphi:M\to N$ is the canonical
epimorphism
\begin{equation}\label{fact}
\mu \varphi = \mu(1),
\end{equation}
where $\mu \in M$, the compatibility condition (\ref{comcon}) holds.
In this case, the induced map $(\alpha,\varphi)^{(n)}:H^n(G,M)\to H^n(H,N)$ is known to be an isomorphism due to the following result.

\begin{lem}[Shapiro's lemma, {\cite[\S 6.3]{94Wei}}] \label{shap}
If $H\leqslant G$ and $N$ is an $H$-module then $H^n(G,\operatorname{Coind}^G_H(N))\cong H^n(H,N)$.
\end{lem}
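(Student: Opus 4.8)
The plan is to use the defining adjunction of coinduction together with a projective resolution of the trivial module $R$ over $RG$. Recall that by definition $\operatorname{Coind}_H^G(N)=\operatorname{Hom}_{RH}(RG,N)$, and that coinduction is right adjoint to restriction: for every $RG$-module $A$ and every $RH$-module $B$ there is a natural isomorphism
$$
\operatorname{Hom}_{RG}(A,\operatorname{Coind}_H^G(B))\cong \operatorname{Hom}_{RH}(A,B),
$$
obtained from the tensor--hom adjunction via $\operatorname{Hom}_{RG}(A,\operatorname{Hom}_{RH}(RG,B))\cong\operatorname{Hom}_{RH}(RG\otimes_{RG}A,B)\cong\operatorname{Hom}_{RH}(A,B)$, where on the right $A$ is regarded as an $RH$-module by restriction.

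First I would choose a projective resolution $P_\bullet\to R$ of the trivial $RG$-module $R$, so that $H^n(G,-)$ is computed as the cohomology of the complex $\operatorname{Hom}_{RG}(P_\bullet,-)$. Applying the adjunction above degreewise with $A=P_i$ and $B=N$ yields an isomorphism of cochain complexes
$$
\operatorname{Hom}_{RG}(P_\bullet,\operatorname{Coind}_H^G(N))\cong \operatorname{Hom}_{RH}(P_\bullet,N).
$$
Since the adjunction is natural in $A$, this isomorphism commutes with the coboundary maps induced by $P_\bullet$, hence induces isomorphisms on cohomology in every degree.

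It then remains to identify the right-hand complex as one computing $H^n(H,N)$. For this I would use that $RG$ is free as an $RH$-module, with a basis given by a transversal of $H$ in $G$; consequently every projective $RG$-module restricts to a projective $RH$-module, while exactness is visibly preserved under restriction. Thus $P_\bullet$, regarded over $RH$, is a projective resolution of $R$ over $RH$, and so $\operatorname{Hom}_{RH}(P_\bullet,N)$ computes $H^n(H,N)$, which gives the desired isomorphism $H^n(G,\operatorname{Coind}_H^G(N))\cong H^n(H,N)$.

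I expect the main obstacle to be bookkeeping rather than anything conceptual. On the one hand, one must confirm that the degreewise adjunction isomorphisms are genuinely compatible with the differentials, i.e.\ that naturality in $A$ is invoked correctly. On the other hand, and more importantly for the use made of this result in the sequel, one should check that the abstract isomorphism obtained above coincides with the explicit chain-level map $(\alpha,\varphi)^{(n)}$ of case $(iii)$, with $\varphi$ the evaluation-at-$1$ map of $(\ref{fact})$. Verifying this amounts to tracing the standard (normalised) resolution through the adjunction and confirming that the induced map on cochains is precisely $\mu\mapsto\mu(1)$; this is routine, but it is the step where care is required.
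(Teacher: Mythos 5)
Your proof is correct, and it is essentially the argument the paper defers to: the paper states Shapiro's lemma without proof, citing \cite[\S 6.3]{94Wei}, and the proof given there is exactly your combination of the adjunction $\operatorname{Hom}_{RG}(A,\operatorname{Coind}_H^G(B))\cong\operatorname{Hom}_{RH}(A,B)$ applied degreewise to a projective resolution, together with the observation that $RG$ is $RH$-free so the resolution restricts to a projective $RH$-resolution. Your closing caveat is also exactly the point the paper handles separately: the identification of this abstract isomorphism with the explicit cochain-level map $(\alpha,\varphi)^{(n)}$, where $\varphi$ is evaluation at $1$ as in (\ref{fact}), is not proved in the paper either but is attributed to \cite[Proposition (III.6.2) and \S 8, Exercise 2]{82Bro}, so flagging it as the step requiring care is well judged.
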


\noindent The fact that the isomorphism in Shaprio's lemma coincides with the map $(\alpha,\varphi)^{(n)}$ is well known,
see \cite[Proposition (III.6.2) and \S 8, Exercise 2]{82Bro}.

\section{Co-induced modules}

Let $\alpha: H\hookrightarrow  G$ be an embedding of groups and let $L$ be a $G$-module. Denote $M= \operatorname{Coind}_H^G(L_H)$.
The canonical embedding $\varepsilon: L \to M$ gives rise to a homomorphism $\varepsilon^{(n)}:H^n(G,L)\to H^n(G,M)$ as in $(i)$ above.
By the previous discussion, we also have the homomorphisms $\alpha^{(n)}$ and $(\alpha,\varphi)^{(n)}$ which fit into the diagram
\begin{equation}\label{diag}
\xymatrix{
H^n(G,L)\ar[r]^{ \varepsilon^{(n)}} \ar[d]_{ \alpha^{(n)}} & \ar[dl]^{\ (\alpha,\varphi)^{(n)} } H^n(G,M) \\
H^n(H,L_H) &
}
\end{equation}
where the map $\varphi:M\to L_H$ is as in (\ref{fact}).

\begin{lem}\label{comd}
Diagram $(\ref{diag})$ is commutative.
\end{lem}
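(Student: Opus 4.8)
The plan is to reduce the commutativity of $(\ref{diag})$, a statement about induced maps on cohomology, to the single identity $\varepsilon\varphi=\operatorname{id}_{L}$ between module homomorphisms. Since all three arrows in the diagram are induced by morphisms of pairs in the sense of Section~2, I would package the argument through the functoriality of $H^{n}$ on the category of pairs; the explicit cochain computation affords an equivalent, equally short route and can serve as a sanity check.

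First I would record each map as coming from a morphism of pairs. The map $\varepsilon^{(n)}$ arises from $(\operatorname{id}_{G},\varepsilon)\colon (G,M)\to (G,L)$ as in case $(i)$; the map $\alpha^{(n)}$ from $(\alpha,\operatorname{id}_{L})\colon (H,L_{H})\to (G,L)$ as in case $(ii)$; and $(\alpha,\varphi)^{(n)}$ from $(\alpha,\varphi)\colon (H,L_{H})\to (G,M)$ as in case $(iii)$. Composing the last two morphisms of pairs in the order $(H,L_{H})\to(G,M)\to(G,L)$ dictated by the diagram yields the morphism $(\alpha,\varepsilon\varphi)\colon (H,L_{H})\to (G,L)$: the group component is $\alpha\,\operatorname{id}_{G}=\alpha$, while the module component is $\varepsilon\varphi$, since module components compose contravariantly (the module map of a pair points from the target module back to the source module).

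The crux is therefore to identify $\varepsilon\varphi$ with $\operatorname{id}_{L}$. This is immediate from the explicit formulas $(\ref{le1})$ and $(\ref{fact})$: for $l\in L$ one has $(l\varepsilon)\varphi=(l\varepsilon)(1)=l\cdot 1=l$, so $\varepsilon\varphi$ is the identity of $L$, regarded as the evident map $L\to L_{H}$ of underlying modules. Hence the composite morphism of pairs is exactly $(\alpha,\operatorname{id}_{L})$, the one inducing $\alpha^{(n)}$, and the contravariant functoriality of $H^{n}$ gives $\varepsilon^{(n)}(\alpha,\varphi)^{(n)}=\alpha^{(n)}$, which is precisely the asserted commutativity.

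I do not expect a genuine obstacle: once $\varepsilon\varphi=\operatorname{id}_{L}$ is established, everything is formal. The only points needing care are bookkeeping ones, namely the contravariance that fixes the order in which $\varepsilon$ and $\varphi$ compose, and the fact (quoted at the end of Section~2) that the Shapiro isomorphism genuinely is the map $(\alpha,\varphi)^{(n)}$ and not merely some isomorphism. As the promised direct check, I would verify the claim on normalised cochains: for $\lambda\in C^{n}(G,L)$ the two paths send $\lambda$ respectively to $(\alpha\times\cdots\times\alpha)\lambda\varepsilon\varphi$ and to $(\alpha\times\cdots\times\alpha)\lambda$, using the cochain formulas of cases $(i)$--$(iii)$, and these agree because $\varepsilon\varphi=\operatorname{id}_{L}$.
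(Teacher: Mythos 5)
Your proposal is correct, and its substance coincides with the paper's: both arguments reduce everything to the single identity $\varepsilon\varphi=\operatorname{id}_L$, verified from (\ref{le1}) and (\ref{fact}) exactly as you do. The difference is in packaging. The paper's actual proof \emph{is} your concluding ``sanity check'': it verifies $\lambda\varepsilon^{(n)}(\alpha,\varphi)^{(n)}=\lambda\alpha^{(n)}$ directly on normalised cochains $\lambda\in C^n(G,L)$, using the explicit chain-level formulas of cases $(i)$--$(iii)$. Your primary route instead composes the morphisms of pairs $(H,L_H)\to(G,M)\to(G,L)$ in the category of \cite[\S III.8]{82Bro} and invokes the contravariant functoriality of $H^n$, so that the composite $(\alpha,\varepsilon\varphi)=(\alpha,\operatorname{id}_L)$ induces $\alpha^{(n)}$; you also get the contravariance bookkeeping right, i.e.\ that the module components compose as $\varepsilon\varphi$ and not $\varphi\varepsilon$. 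What the functorial route buys is independence from the cochain descriptions: one needs only that the three maps are induced by pair morphisms and that $H^n$ is a functor on this category (which the paper cites but never explicitly exploits as functoriality). What the paper's route buys is self-containedness: it never needs to check that the composite of pair morphisms is again a pair morphism (i.e.\ that condition (\ref{comcon}) is inherited), a routine point your argument implicitly uses. The two proofs are thus equivalent in content, yours being a mildly more abstract wrapping of the same computation.
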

\begin{proof}
It suffices to check that $\lambda \varepsilon^{(n)} (\alpha,\varphi)^{(n)} = \lambda \alpha^{(n)}$
for every $\lambda \in C^n(G,L)$. By $(i)$--$(iii)$ above, we have
$$
(\lambda \varepsilon^{(n)} )(\alpha,\varphi)^{(n)} = (\lambda \varepsilon )(\alpha,\varphi)^{(n)} = (\alpha\times \ldots \times\alpha)\lambda \varepsilon \varphi =
(\alpha\times \ldots \times\alpha) \lambda = \lambda \alpha^{(n)},
$$
since $\varepsilon \varphi = \operatorname{id}_L$ due to (\ref{le1}) and (\ref{fact}). The claim follows.
\end{proof}
The map $(\alpha,\varphi)^{(n)}$ is an isomorphism by Lemma \ref{shap}. Therefore, Lemma \ref{comd} implies

\begin{cor}\label{ker}
$\operatorname{Ker} \varepsilon^{(n)} = \operatorname{Ker} \alpha^{(n)}$.
\end{cor}

We note that henceforth instead of $G$-modules we may as well consider arbitrary $RG$-modules. This
follows from the next result which essentially says that co-induced modules and cohomology groups are independent of the ground ring.

\begin{lem}\label{extr} For $H\leqslant G$, let $M$ be an $RG$-module and $N$ an $RH$-module. Then the following isomorphisms of
abelian groups hold:
\begin{enumerate}
  \item[$(i)$] $\operatorname{Hom}_{RH}(RG,N)\cong\operatorname{Hom}_{\mathbb{Z}H}(\mathbb{Z}G,N)$;
  \item[$(ii)$] $\operatorname{Ext}^n_{RG}(R,M)\cong \operatorname{Ext}^n_{\mathbb{Z}G}(\mathbb{Z},M)$.
\end{enumerate}
\end{lem}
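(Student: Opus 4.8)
The plan is to prove each of the two isomorphisms by identifying both of its sides with one and the same ring-independent abelian group. For $(i)$ I would fix a left transversal $T$ of $H$ in $G$, so that $G=\bigsqcup_{t\in T}tH$. Then $RG$ is free as a right $RH$-module on the basis $T$ (each $g\in G$ is uniquely $th$ with $t\in T$, $h\in H$), and for exactly the same reason $\mathbb{Z}G$ is free as a right $\mathbb{Z}H$-module on the same basis $T$. Evaluation on $T$ therefore gives isomorphisms of abelian groups $\operatorname{Hom}_{RH}(RG,N)\cong\prod_{t\in T}N$ and $\operatorname{Hom}_{\mathbb{Z}H}(\mathbb{Z}G,N)\cong\prod_{t\in T}N$, since a homomorphism out of a free module is determined by, and may take, arbitrary values on a basis. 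To make the comparison canonical rather than merely abstract, I would use the restriction map that precomposes an $RH$-homomorphism $f\colon RG\to N$ with the canonical ring homomorphism $\mathbb{Z}G\to RG$ induced by $\mathbb{Z}\to R$; because this ring map carries each $t\in T$ to $t$, the restriction map corresponds to the identity of $\prod_{t\in T}N$ under the two evaluation isomorphisms, hence is bijective. Its well-definedness ($RH$-linearity restricts to $\mathbb{Z}H$-linearity, with $N$ viewed as a $\mathbb{Z}H$-module) and a two-sided inverse (extend a prescribed family of values $RH$-linearly) are routine. Since $M=\operatorname{Coind}_H^G(L_H)=\operatorname{Hom}_{RH}(RG,L_H)$, this is precisely the statement that co-induced modules do not depend on the ground ring as abelian groups.

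For $(ii)$ I would compute each $\operatorname{Ext}$ group from a bar resolution over the relevant ground ring. Over a commutative ring $k$, the unnormalised bar resolution of the trivial module $k$ has as its $n$th term the free $kG$-module on the set $G^n$; applying $\operatorname{Hom}_{kG}(-,M)$ then identifies the resulting cochain complex with $\operatorname{Map}(G^\bullet,M)$, and under this identification the coboundary is the standard group-cohomology differential. The key point is that this differential refers only to multiplication in $G$ and to the action of individual group elements of $G$ on $M$, and never to scalar multiplication by $k$. Since the underlying abelian group of $M$ together with this $G$-action is literally the same whether $M$ is regarded as an $RG$-module or, by restriction along $\mathbb{Z}\to R$, as a $\mathbb{Z}G$-module, the two cochain complexes obtained for $k=R$ and for $k=\mathbb{Z}$ coincide term by term and differential by differential. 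Taking cohomology yields $\operatorname{Ext}^n_{RG}(R,M)\cong\operatorname{Ext}^n_{\mathbb{Z}G}(\mathbb{Z},M)$, the isomorphism again being the identity on the common complex.

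The part requiring genuine care, rather than bookkeeping, is the verification that each bar complex really does compute the $\operatorname{Ext}$ groups in question, i.e. that its terms are free (hence projective) $kG$-modules and that the augmented complex is exact. Freeness is immediate from the construction, and exactness I would obtain from the usual contracting homotopy, observing that it need only be a homotopy of complexes of $k$-modules, equivalently of abelian groups, in order to certify that the complex is a resolution of $k$. Once this is secured for both $k=R$ and $k=\mathbb{Z}$, the term-by-term identification of the two cochain complexes forces $(ii)$, just as the transversal computation settles $(i)$.
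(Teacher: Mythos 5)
Your proof is correct, so the remaining question is how it relates to the paper's. For part $(i)$ you are doing essentially what the paper does, only with a different basis: the paper identifies both Hom groups with the single ring-independent set $\{f\colon G\to N \mid (gh)f=(gf)h \ \ \forall g\in G,\, h\in H\}$, implicitly using that $G$ is an $R$-basis of $RG$ (resp.\ a $\mathbb{Z}$-basis of $\mathbb{Z}G$), whereas you use freeness of $RG$ over $RH$ (resp.\ of $\mathbb{Z}G$ over $\mathbb{Z}H$) on a left transversal $T$ and identify both sides with $\prod_{t\in T}N$. The paper's version avoids choosing a transversal and exhibits the two groups as literally the same set of functions; yours has the minor extra virtue of naming the comparison map explicitly as restriction along $\mathbb{Z}G\to RG$ and checking it is an isomorphism. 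The real divergence is in part $(ii)$: the paper gives no argument at all, deferring entirely to \cite[Lemma 9.4.13]{02LeeMcK}, while you supply an actual proof via the unnormalised bar resolution over each ground ring $k$, observing that $\operatorname{Hom}_{kG}(B_\bullet,M)$ is $\operatorname{Map}(G^\bullet,M)$ with a differential built only from the group law, the $G$-action on $M$, and integer signs, hence is the identical cochain complex for $k=R$ and $k=\mathbb{Z}$; your care about the two hypotheses that make the bar complex a legitimate projective resolution (freeness of its terms over $kG$, and exactness certified by a merely $k$-linear contracting homotopy) is exactly the right place to put the care. So your writeup is self-contained where the paper leans on the literature; the cost is length, the benefit is that the reader sees why group cohomology is independent of the ground ring rather than taking it on citation.
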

\begin{proof}
$(i)$ Both abelian groups equal
$$
\{f:G\to N\mid (gh)f=(gf)h \quad \forall g\in G, h\in H\}
$$
with the natural additive structure.

$(ii)$ See \cite[Lemma 9.4.13]{02LeeMcK}.
\end{proof}

\section{Proof of main results}

We now prove Theorem \ref{main}.

\begin{proof} Since the split extension $M\leftthreetimes G$ is defined by the zero element of $H^2(G,M)$, Lemma \ref{h2h}
implies that $S$ is a subextension of $M\leftthreetimes G$ with respect to $\varepsilon$ if and only if $\overline{\delta}\in \operatorname{Ker} \varepsilon^{(2)}$,
where $\overline{\delta}\in H^2(G,L)$ defines $S$. By Corollary \ref{ker} specialised to dimension $2$, we have $\operatorname{Ker} \varepsilon^{(2)}=\operatorname{Ker} \alpha^{(2)}$,
where $\alpha^{(2)}: H^2(G,L)\to H^2(H,L_H)$ and $\alpha$ is the embedding $H\hookrightarrow G$.
However, $\overline{\delta}$ lies in $\operatorname{Ker} \alpha^{(2)}$ if and only if it is
mapped to the zero element of $H^2(H,L_H)$ which defines the split extension $L\rightthreetimes H$, i.\,e. this is possible if and only if
$H$ is liftable to $S$, as is required. \end{proof}

In a similar fashion, Theorem \ref{aux} can be proved as follows.

\begin{proof} The $L$-conjugacy classes of complements to $L$ in $L\leftthreetimes G$ are in a one-to-one correspondence with the elements of $H^1(G,L)$ with the
class of $G$ corresponding to the zero of $H^1(G,L)$, see \cite[11.1.3]{96Rob}. Therefore, by considering the
action on $1$-cocycles, one sees that the elements
of the kernel of $\varepsilon^{(1)}:H^1(G,L)\to H^1(G,M)$ correspond to the $L$-conjugacy classes of complements
in $L\leftthreetimes G$ that merge to the $M$-conjugacy class of $G$.
On the other hand, Corollary \ref{ker} specialised to dimension $1$ implies that
$\operatorname{Ker} \varepsilon^{(1)}=\operatorname{Ker} \alpha^{(1)}$. Again, by considering the action on $1$-cocycles, we see that the elements of the kernel of $\alpha^{(1)}: H^1(G,L)\to H^1(H,L_H)$ correspond to  the $L$-conjugacy classes of complements in $L\leftthreetimes G$ that intersect $L\leftthreetimes H$ in an $L$-conjugate of $H$. The claim follows from these remarks.
\end{proof}

\section{Defining subgroups}

Given an $RG$-module $L$ and a subgroup $H\leqslant G$, we say that an extension
\begin{equation}\label{lse}
0\longrightarrow L \stackrel{\iota}{\longrightarrow} S\stackrel{\pi}{\longrightarrow} G\longrightarrow 1
\end{equation}
is {\em defined} by $H$ if $L$ is a subextension of $M\leftthreetimes G$, where $M=\operatorname{Coind}_H^G(L_{H})$,
with respect to the natural embedding $\varepsilon: L\to M$ given in (\ref{le1}).

\begin{lem}\label{defs} Let $H\leqslant G$, let $L$ be an $RG$-module, and let $S$ be the extension $(\ref{lse})$ that is defined by $H$. Then
\begin{enumerate}
\item[$(i)$] $S$ is defined by $K$ for every $K\leqslant H$;
\item[$(ii)$] $S$ is defined by $H^g$ for every $g\in G$.
\end{enumerate}
\end{lem}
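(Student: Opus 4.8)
The plan is to reduce the lemma to elementary properties of liftable subgroups. By Theorem \ref{main}, the extension $S$ in (\ref{lse}) is defined by a subgroup $H\leqslant G$ if and only if $H$ is liftable to $S$, that is, $H\pi^{-1}$ splits over $L\iota$. Thus it suffices to establish two statements: $(i)$ liftability of $H$ implies liftability of every $K\leqslant H$; and $(ii)$ liftability of $H$ implies liftability of every conjugate $H^g$, $g\in G$. Throughout I fix a complement $T$ to $L\iota$ in $H\pi^{-1}$, so that $T\cap L\iota=1$, $T\cdot L\iota=H\pi^{-1}$, and $\pi$ restricts to an isomorphism $T\to H$.

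For $(i)$, I would take $T_K=T\cap K\pi^{-1}$ and verify that it is a complement to $L\iota$ in $K\pi^{-1}$. The intersection condition $T_K\cap L\iota=1$ is immediate from $T\cap L\iota=1$. For the factorisation $K\pi^{-1}=T_K\cdot L\iota$, given $x\in K\pi^{-1}$ one uses that $\pi|_T$ maps onto $H\supseteq K$ to find $t\in T$ with $t\pi=x\pi$; then $t\in T_K$ and $xt^{-1}\in L\iota$, which yields the desired decomposition.

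For $(ii)$, I would choose a preimage $s\in S$ of $g$ under $\pi$ and set $T^s=s^{-1}Ts$. Since $L\iota=\operatorname{Ker}\pi$ is normal in $S$, conjugation by $s$ fixes $L\iota$ and carries $H\pi^{-1}$ onto $(H^g)\pi^{-1}$; consequently $T^s$ is a complement to $L\iota$ in $(H^g)\pi^{-1}$, proving that $H^g$ is liftable.

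The arguments are routine once the translation through Theorem \ref{main} is made; the only points requiring a little care are the surjectivity step in $(i)$ and the analogous identification $(H\pi^{-1})^s=(H^g)\pi^{-1}$ in $(ii)$, both of which follow from $\pi$ being a homomorphism onto $G$ with $\pi|_T$ an isomorphism onto $H$. I expect no genuine obstacle here. Alternatively, one could argue cohomologically: writing $\alpha^{(2)}$ for the restriction map $H^2(G,L)\to H^2(H,L_H)$, the condition that $S$ is defined by $H$ means $\overline{\delta}\in\operatorname{Ker}\alpha^{(2)}$ by Corollary \ref{ker}, whereupon $(i)$ follows from transitivity of restriction and $(ii)$ from the fact that inner automorphisms act trivially on $H^2(G,L)$.
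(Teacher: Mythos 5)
Your proposal is correct and takes essentially the same route as the paper's own primary proof: reduce via Theorem \ref{main} to the statement that liftability of $H$ implies liftability of every $K\leqslant H$ and of every conjugate $H^g$, facts the paper dismisses as clear and you verify explicitly (correctly) by intersecting and conjugating a complement. The paper additionally records a second, independent argument avoiding Theorem \ref{main}, built from explicit $RG$-maps $\operatorname{Coind}_H^G(L_H)\to\operatorname{Coind}_K^G(L_K)$ and $\operatorname{Coind}_H^G(L_H)\to\operatorname{Coind}_{H^g}^G(L_{H^g})$ extended to the semidirect products, but your version suffices as a proof of the lemma.
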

\begin{proof} By Theorem \ref{main}, the fact that $S$ is defined by $H$ is equivalent to the liftability of $H$ to $S$ which clearly implies
the liftability of both $K$ and $H^g$, hence the claim.

Observe that we can also prove this lemma without using Theorem \ref{main}. Indeed,
let $M=\operatorname{Coind}_H^G(L_{H})$ and let $\beta:S\to M\leftthreetimes G$ be the subextension embedding.

First, suppose $K\leqslant H$ and denote $N=\operatorname{Coind}_K^G(L_{K})$. There is a canonical $RG$-embedding $\varphi: M\to N$
which acts identically on every element of $M$ viewed as a map $G\to L$. In particular, $\delta=\varepsilon\varphi$ is the natural embedding $L\to N$.
Also, $\varphi$ uniquely extends to a map $\alpha: M\leftthreetimes G\to N\leftthreetimes G$ so that $\beta\alpha$ gives the required
subextension embedding $S\to N\leftthreetimes G$ with respect to $\delta$.
$$
\begin{array}{c}
\xymatrix@=4.4mm@M=1.5mm{
0\ar[r]&L\ar[r]\ar[d]^{\,\varepsilon}&S\ar[r]\ar[d]^{\,\beta}&G\ar[r]\ar@{=}[d]&1\\
0\ar[r]&M\ar[r]\ar[d]^{\,\varphi}&M\leftthreetimes G\ar[r]\ar[d]^{\,\alpha}&G\ar[r]\ar@{=}[d]&1\\
0\ar[r]&N\ar[r]                  &N\leftthreetimes G\ar[r]                  &G\ar[r]          &1
}\end{array}
$$

Second, suppose $g\in G$ and denote $U=\operatorname{Coind}_{H^g}^G(L_{H^g})$.
Since the $RH$- and $RH^g$-modules $L_{H}$ and $L_{H^g}$ are conjugate by $g$,
there is an $RG$-isomorphism $\psi: M\to U$ given by $(\mu\psi)(x)=\mu(xg^{-1})g$ for all $\mu \in M$, $x\in G$.
We see that $\varepsilon\psi$ is the natural embedding $L\to U$, because $(l\varepsilon\psi)(x)=(l\varepsilon)(xg^{-1})g=lxg^{-1}g=lx$.
Hence, as above we have the required subextension embedding $S\to U\leftthreetimes G$.
\end{proof}

By Lemma \ref{defs}, the study of defining subgroups for a given extension (\ref{lse}) reduces to the the study up to conjugacy
of maximal liftable to $S$ subgroups of $G$. The set of such subgroups is nonempty as the identity subgroup
is always liftable.

For example, consider the particular case of alternating groups and their central double covers.

\begin{prob}\label{2an} Let $G=A_n$ be the alternating group of degree $n\geqslant 4$ and let $S=2.A_n$ be its nonsplit double cover.
\begin{enumerate}
\item[$(i)$] Describe the maximal liftable to $S$ subgroups of $G$.
\item[$(ii)$] Find a function $f:\mathbb{N}\to\mathbb{N}$ such that $f(n)$ is the minimal number with the property that $S$ is embedded
to a semidirect product $M\leftthreetimes G$, where $M$ is an elementary abelian group of order $2^{f(n)}$.
\item[$(iii)$] Describe the maximal subgroups of $G$ that lift to $S$.
\end{enumerate}
\end{prob}

\noindent
It follows from Theorem \ref{main} that the value $f(n)$ in item $(ii)$ is bounded above by the minimal index of liftable subgroups.
The case $(iii)$, where a maximal subgroup of $G$ is liftable to $S$, is of special interest, because we then obtain the most `economic'
subextension embedding in view of Lemma \ref{defs}$(i)$. This need not always happen, however, as we saw, for example,
in the case $G=\operatorname{PSL}_n(q)$ above. For $G=A_n$, it can be shown that no maximal subgroup is liftable to $2.A_n$ for $n=5,6,7,8$,
but there are three conjugacy classes of maximal subgroup of $A_9$ that lift to $2.A_9$. These subgroups
have indices $120$ (two classes) and~$840$ (one class) and are isomorphic to $\operatorname{L}_2(8)\!:\!3$ and $\operatorname{ASL}_2(3)$, respectively.

\providecommand{\bysame}{\leavevmode\hbox to3em{\hrulefill}\thinspace}
\providecommand{\MR}{\relax\ifhmode\unskip\space\fi MR }
\providecommand{\MRhref}[2]{%
  \href{http://www.ams.org/mathscinet-getitem?mr=#1}{#2}
}
\providecommand{\href}[2]{#2}

\end{document}